\documentclass[12pt]{article}

\addtolength{\oddsidemargin}{-.5in}
\addtolength{\evensidemargin}{-.5in}
\addtolength{\textwidth}{1.25in}
\addtolength{\topmargin}{-.25in}
\addtolength{\textheight}{1.0in}

\usepackage{mathrsfs}
\usepackage{amsmath}
\usepackage{amssymb}
\usepackage{amsthm}
\usepackage{hyperref}
\usepackage{mathtools}
\usepackage{tikz-cd}
\usepackage{tikz}
\usetikzlibrary{arrows,decorations.markings}
\usepackage{caption}
\usepackage{bbm}
\usepackage{comment}

\usepackage{authblk}

\newtheorem{theorem}{Theorem}[section]
\newtheorem{lemma}[theorem]{Lemma}
\newtheorem{definition}[theorem]{Definition}
\newtheorem{Proposition}[theorem]{Proposition}
\newtheorem{remark}[theorem]{Remark}

\newcommand{\bZ}{\mathbb{Z}}
\newcommand{\bQ}{\mathbb{Q}}
\newcommand{\Zp}{\bZ_p}
\newcommand{\Qp}{\bQ_p}
\newcommand{\br}[1]{\overline{#1}}
\newcommand{\brQp}{\br{\bQ}_p}
\newcommand{\Fp}{\mathbb{F}_p}
\newcommand{\brFp}{\br{\mathbb{F}}_p}
\newcommand{\GL}{\mathrm{GL}}
\newcommand{\GLQp}{\GL_2(\Qp)}
\newcommand{\SLQp}{\mathrm{SL}_2(\Qp)}
\newcommand{\IZind}{\mathrm{ind}_{IZ}^G\>}
\newcommand{\KZind}{\mathrm{ind}_{KZ}^G\>}
\newcommand{\IZKZind}{\mathrm{ind}_{IZ}^{KZ}\>}

\newcommand{\im}{\mathrm{Im}\>}
\newcommand{\Ker}{\mathrm{Ker}\>}
\newcommand{\id}{\mathrm{id}}
\newcommand{\GQp}{\mathrm{Gal}(\brQp/\Qp)}
\newcommand{\ind}{\mathrm{ind}\>}
\newcommand{\matUp}{\beta\begin{pmatrix}
1 & \lambda \\
0 & 1
\end{pmatrix}w}
\newcommand{\rmss}{\mathrm{ss}}

\title{An Iwahori theoretic mod $p$ Local Langlands Correspondence}
\author{Anand Chitrao}
\affil{School of Mathematics, Tata Institute of Fundamental Research \\ Homi Bhabha Road, Mumbai - 400005, India.}

\begin{document}

\maketitle

\begin{abstract}
    We extend a comparison theorem of Anandavardhanan-Borisagar between the quotient of the induction of a mod $p$ character by the image of an Iwahori-Hecke operator and compact induction of a weight to the case of the trivial character. This involves studying the corresponding non-commutative Iwahori-Hecke algebra. We use this to give an Iwahori theoretic reformulation of the (semi-simple) mod $p$ Local Langlands Correspondence discovered by Breuil and reformulated functorially by Colmez. This version of the correspondence is expected to have applications to computing the mod $p$ reductions of semi-stable Galois representations.
\end{abstract}

\section{Notation and conventions}

    Let $p$ be a prime. Let $\omega$ and $\omega_2$ be fundamental characters of $I_{\Qp}$ of levels $1$ and $2$. The character $\omega$ has a canonical extension to $\GQp$. For an integer $c$ with $p + 1 \nmid c$, we choose an extension of $\omega_2^c$ to $\mathrm{Gal}(\brQp/\bQ_{p^2})$ so that the irreducible representation
    $\mathrm{ind}(\omega_2^c)$ obtained
    by inducing this extension from $\mathrm{Gal}(\brQp/\bQ_{p^2})$ to $\mathrm{Gal}(\brQp/\Qp)$ has determinant $\omega^c$. Let $\mu_{\lambda}$ be the unramified character of $\GQp$ sending a geometric Frobenius to $\lambda \in \brFp^*$.

    Let $G$ be the group $\GLQp$ and $Z$ be its center. Let $K$ be the maximal compact subgroup $\GL_2(\Zp)$. The Iwahori subgroup of upper triangular matrices mod $p$ is denoted by $I$.

    Let
    \[
        \alpha = \begin{pmatrix}1 & 0 \\ 0 & p\end{pmatrix}, \beta = \begin{pmatrix}0 & 1 \\ p & 0\end{pmatrix} \text{ and } w = \begin{pmatrix}0 & 1 \\ 1 & 0\end{pmatrix}.
    \]

    For $r \geq 0$, let $V_r$ be the symmetric $r$-th power of the standard representation $\Fp^2$ of $K$. Let $V_r^* \subset V_r$ be the $K$-subrepresentation of polynomials divisible by $X^pY - XY^p$. We think of these as representations of $KZ$ by making the scalar matrix $p$ act trivially. Let $d^r$ be the character $I \to \Fp^*$ that sends $\begin{pmatrix}a & b \\ c & d\end{pmatrix} \in I$ to $d^r \!\! \mod p$ and extend it to $IZ$ by sending $p \in Z$ to $1$.

    Let $\KZind \sigma$ denote the compact induction of a representation $\sigma$ of $KZ$ over $\Fp$. Functions in this space are denoted by $[g, v]$.
    Similarly, let $\IZind \sigma$ and $\IZKZind \sigma$ denote the compact induction and induction of a representation $\sigma$ of $IZ$, respectively. Functions in these spaces are denoted by $[[g, v]]$ and $[[[g, v]]]$, respectively. By spherical Hecke algebra, we mean the algebra $\mathrm{End}_G(\KZind \sigma)$ for an irreducible representation $\sigma$ of $KZ$. Similarly, by Iwahori-Hecke algebra, we mean $\mathrm{End}_G(\IZind \sigma)$.

    Let $[a] \in \{0, \ldots, p - 2\}$ be the class of an integer $a$ mod $p - 1$. Let $\delta_{a, b} = 1$ if $a = b$ and $0$ otherwise.

\section{Introduction}

Let $F$ be a local field of residue characteristic $p$. The study of smooth irreducible representations of $\GL_2(F)$ with a central character over fields of characteristic $p$ was initiated by Barthel and Livn\'e in \cite{BL95} and \cite{BL94}. In \cite{Bre03a}, Breuil studied the particularly tricky sub-class of such representations called the supersingular representations and proved that they are irreducible, at least when $F = \Qp$. He then wrote down the following definition, which he called the mod $p$ Local Langlands Correspondence (mod $p$ LLC), which is an injection from isomorphism classes of two-dimensional semi-simple mod $p$ representations of $\GQp$ to isomorphism classes of smooth representations of $\GLQp$.
\begin{definition}[{\cite[Definition 4.2.4]{Bre03a}}]\label{Breuil's mod p LLC}
    For $r \in \{0, \ldots, p - 1\}$, $\lambda \in \brFp$ and a smooth character $\eta : \Qp^* \to \brFp^*$, define
    \begin{enumerate}
        \item If $\lambda = 0$:
                \[
                    (\ind \omega_2^{r + 1}) \otimes \eta \xleftrightarrow{\quad \quad} \pi(r, 0, \eta)
                \]
        \item If $\lambda \neq 0$:
                \[
                    (\mu_{\lambda}\omega^{r + 1} \oplus \mu_{\lambda^{-1}})\otimes \eta \xleftrightarrow{\quad \quad} \pi(r, \lambda, \eta)^{\rmss} \oplus \pi([p - 3 - r], \lambda^{-1}, \eta \omega^{r + 1})^{\rmss}.
                \]
    \end{enumerate}
\end{definition}
\noindent More precisely, this is the semi-simple version of the mod $p$ LLC, which is all we consider in this paper. There is also a version without semi-simplification by Breuil \cite[Definition 2.2]{Bre10} and a functorial version due to Colmez \cite{Col10}.

One of the applications of this correspondence is to compute the reduction mod $p$ of irreducible two-dimensional crystalline representations of $\GQp$. Let us explain this idea now. For each $k \geq 2$ and $a_p \in \brQp$ of positive valuation, there exists an irreducible, two-dimensional crystalline representation $V_{k, a_p}$ of $\GQp$ over a $p$-adic number field. Let $B(V_{k, a_p})$ be the Banach space associated to $V_{k, a_p}$ under the $p$-adic Local Langlands Correspondence. In \cite{Bre03b}, Breuil conjectured that (the semi-simplification) of the reduction mod $p$ of $V_{k, a_p}$ corresponds to (the semi-simplification) of the reduction mod $p$ of $B(V_{k, a_p})$ under the bijection given in Definition \ref{Breuil's mod p LLC}. Using this conjecture, he computed the reduction mod $p$ of $V_{k, a_p}$ for $k \leq 2p$ ($k \neq 4$ if $p = 2$) and all $a_p$. The following famous diagram illustrates this idea:
\[\small
\begin{tikzcd}
    V_{k, a_p} \ar[d] \ar[r, leftrightarrow, "\text{$p$-adic LLC}"] &[2cm] B(V_{k, a_p}) \ar[d] \\
    \br{V}_{k, a_p} \ar[r, leftrightarrow, "\text{mod $p$ LLC}"] &[2cm] \> \br{B(V_{k, a_p})}.
\end{tikzcd}
\]
Breuil's conjecture was subsequently verified by Berger in \cite{Ber10}.

This idea of using the compatibility with respect to reduction mod $p$ between the $p$-adic and mod $p$ Local Langlands Correspondences was used in \cite{BG09} \cite{BG13}, \cite{GG15}, \cite{BG15}, \cite{BGR18}, \cite{GR23} to compute the reduction of $V_{k, a_p}$ for unbounded weights $k$ but bounded slopes $v_p(a_p)$. Thanks to results in these papers, the reduction of $V_{k, a_p}$ is known for arbitrary $k \geq 2$, $0 < v_p(a_p) < 2$ and $p$ odd (and in some cases $p \geq 5$). Nagel-Pande \cite{NP19} and Arsovski \cite{Ars21} also have some results for higher slopes. The growing complications in all these results were the reason Breuil only restricted to $k \leq 2p$ (see the comments after \cite[Th\'eor\`eme 1.4]{Bre03b}). The phenomenon of growing complications can be seen in \cite{Gha21} in the \emph{zig-zag} conjecture (now a theorem due to \cite{Gha22}) of Ghate, which describes the reduction mod $p$ of $V_{k, a_p}$ in one of the trickiest cases, namely for weights $k \equiv 2v_p(a_p) + 2 \mod p - 1$ and slopes $v_p(a_p)$ in the interval $(0, \frac{p - 1}{2})$. Recently Bhattacharya, Ghate and Vangala have computed $\br{V}_{k, a_p}$ in many cases for arbitrary $k \geq 2$ and $0 < v_p(a_p) < p$.

In this paper, we rewrite Breuil's mod $p$ LLC using the language of Iwahori induction:
\begin{theorem}[Iwahori mod $p$ LLC]\label{Iwahori mod p LLC}
    For $r \in \{0, \ldots, p - 1\}$, $\lambda \in \brFp$ and a smooth character $\eta: \Qp^* \to \brFp^*$, define
            
            \begin{itemize}
            \item If $\lambda = 0:$ 
                \[
                    (\ind \omega_2^{r + 1}) \otimes \eta \xleftrightarrow{\quad \quad} \frac{\IZind d^r}{(T_{-1, 0} + {\delta_{r, p - 1}T_{1, 0}}) + (T_{1, 2} + \delta_{r, 0}T_{1, 0})} \otimes \eta
                \]
            \item If $\lambda \neq 0:$
                \begin{eqnarray*}
                    (\mu_{\lambda}\omega^{r + 1} \oplus \mu_{\lambda^{-1}})\otimes \eta \!\!\!\! & \xleftrightarrow{\quad \quad} \!\!\!\! & \left(\frac{\IZind d^r}{(T_{-1, 0} + \delta_{r, p - 1}T_{1, 0}) + (T_{1, 2} + \delta_{r, 0}T_{1, 0} - \lambda)} \otimes \eta\right)^{\!\!\rmss} \!\!\oplus \\
                    && \left(\frac{\IZind d^{[p - 3 - r]}}{(T_{-1, 0} + \delta_{[p - 3 - r], p - 1}T_{1, 0}) + (T_{1, 2} + \delta_{[p - 3 - r], 0}T_{1, 0} - \lambda^{-1})}\otimes \eta\omega^{r + 1}\right)^{\!\!\rmss}\!\!\!.
                \end{eqnarray*}
            \end{itemize}
\end{theorem}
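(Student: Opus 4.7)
The plan is to reduce the statement to Breuil's Definition \ref{Breuil's mod p LLC} by establishing, for each $r \in \{0, \ldots, p-1\}$ and $\lambda \in \brFp$, a natural $G$-equivariant isomorphism
\[
    \frac{\IZind d^r}{(T_{-1, 0} + \delta_{r, p - 1}T_{1, 0}) + (T_{1, 2} + \delta_{r, 0}T_{1, 0} - \lambda)} \;\cong\; \frac{\KZind V_r}{T - \lambda},
\]
where $T$ is the generator of the spherical Hecke algebra attached to $V_r$. Once this is proved, the right-hand side is $\pi(r, \lambda, \mathrm{triv})$ by definition, and twisting both sides by $\eta$ recovers the bullet points of Theorem \ref{Iwahori mod p LLC} from the corresponding ones of Definition \ref{Breuil's mod p LLC}. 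The second summand in the $\lambda \neq 0$ case is handled by applying the same isomorphism with $r$ replaced by $[p-3-r]$ and $\lambda$ by $\lambda^{-1}$, then twisting by $\eta \omega^{r + 1}$.

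To build this isomorphism I would first set up a concrete description of the Iwahori-Hecke algebra $\mathrm{End}_G(\IZind d^r)$ via the Bruhat-Iwahori decomposition $G = \bigsqcup_{n, \varepsilon} IZ \alpha^n \beta^{\varepsilon} IZ$, realising each $T_{\varepsilon, n}$ as convolution with the characteristic function of a single double coset and computing its action on the basis elements $[[g, v]]$ in closed form. Following and extending Anandavardhanan--Borisagar, I would then construct a natural surjection $\IZind d^r \twoheadrightarrow \KZind V_r$ coming from transitivity of compact induction together with the identification $V_r^I \cong d^r$ as $IZ$-modules, and show that its kernel is generated by the image of $T_{-1, 0} + \delta_{r, p - 1}T_{1, 0}$. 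This accounts for the first factor in the denominator.

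With $\KZind V_r$ realised as a quotient, the next step is to identify the residual operator $T_{1, 2} + \delta_{r, 0}T_{1, 0}$ with (a scalar multiple of) the spherical Hecke operator $T \in \mathrm{End}_G(\KZind V_r)$ described explicitly by Barthel--Livné. I would verify this by evaluating both operators on the standard basis element $[\id, v_0] \in \KZind V_r$ and matching the resulting linear combinations of translates $[\alpha^n, \cdot]$ term by term. Quotienting further by the image of $T - \lambda$ then yields $\pi(r, \lambda, \mathrm{triv})$, closing the argument.

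The hard part will be the extension to the trivial character case $d^0 = d^{p-1} = \mathrm{triv}$. In this boundary situation the Iwahori-induced module acquires a Steinberg-type composition factor not present for $0 < r < p-1$, and $T_{\pm 1, 0}$ interact non-trivially modulo it, so the clean short exact sequence of Anandavardhanan--Borisagar fails. The Kronecker delta corrections $\delta_{r, p-1} T_{1, 0}$ and $\delta_{r, 0} T_{1, 0}$ are precisely the adjustments needed to cut out this extra factor, and checking that they do so on the nose, rather than up to a lower-order error, will require a delicate analysis of the $I$-socle filtration on $V_r$ and its interplay with the subrepresentation $V_r^* \subset V_r$ introduced in the notation section.
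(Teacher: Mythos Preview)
Your overall reduction --- show $\pi(r,\lambda,1) \cong \IZind d^r / \big((T_{-1,0}+\delta_{r,p-1}T_{1,0})+(T_{1,2}+\delta_{r,0}T_{1,0}-\lambda)\big)$ and then plug into Breuil's definition --- is exactly what the paper does, and for $0<r<p-1$ you correctly invoke the Anandavardhanan--Borisagar comparison.

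For the boundary cases $r\in\{0,p-1\}$, however, your plan has a concrete mismatch. The ``natural'' surjection you describe, built from $V_r^I \cong d^r$ (which is indeed the trivial character here), is the map $[[g,1]]\mapsto [g,v_0]$ with $v_0=1$ or $v_0=X^{p-1}$. A direct computation on $[\id,v_0]$ shows that under this map it is $T_{-1,0}+\delta_{r,0}T_{1,0}$ that matches the spherical $T$, while the kernel is generated by $T_{1,2}+\delta_{r,p-1}T_{1,0}$ --- i.e.\ the roles of $T_{-1,0}$ and $T_{1,2}$ come out \emph{swapped} from what the theorem asserts. So your verification step (``evaluate both operators on $[\id,v_0]$ and match'') would in fact fail. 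The missing ingredient is the involution $T_{1,0}$ of $\IZind 1$: since $T_{1,0}^2=1$ and $T_{-1,0}=T_{1,0}T_{1,2}T_{1,0}$, conjugation by $T_{1,0}$ interchanges $T_{1,2}$ and $T_{-1,0}$, and composing your natural map with $T_{1,0}$ produces the presentation stated in the theorem. The paper makes this flip explicit (Remark~\ref{Order 2 automorphism}).

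The paper's route to the boundary cases is also organised differently from yours. Rather than computing the kernel of a single surjection and analysing socle filtrations, it observes that $-T_{1,2}T_{1,0}$ is an idempotent in the non-commutative Iwahori--Hecke algebra, giving a canonical splitting $\IZind 1 = \im(T_{1,2}T_{1,0})\oplus\im(1+T_{1,2}T_{1,0})$. On the other side it identifies $\IZKZind 1$ with $V_{2p-2}/V_{2p-2}^*$ and uses the decomposition $V_{2p-2}/V_{2p-2}^* \cong V_{p-1}\oplus V_0$ (the latter via injectivity of $V_{p-1}$). Matching the two splittings handles $r=0$ and $r=p-1$ simultaneously; the $T_{1,0}$ flip is then applied at the end. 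Your final paragraph anticipates difficulty in the right place, but the pointer to $V_r^*$ for $r\le p-1$ is a red herring (it vanishes there); the relevant $V^*$ appears only at weight $2p-2$, in the auxiliary module $\IZKZind 1$.
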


One of the applications of the Iwahori mod $p$ LLC is to compute the reduction of irreducible two-dimensional semi-stable non-crystalline representations of $\GQp$. The method to compute these reductions is analogous to the one discussed above for crystalline representations. Namely, the reduction of such a semi-stable representation is the one that corresponds, under Theorem \ref{Iwahori mod p LLC}, to the reduction of the Banach space associated to it by the $p$-adic LLC. This has been carried out recently by Ghate and the author for semi-stable representations with difference of Hodge-Tate weights lying in the range $[2, p]$ in a preprint \cite{CG23}. In particular, we have managed to compute the reduction mod $p$ of the semi-stable representation $V_{k, \mathcal{L}}$ for $k = p$ and $p + 1$ and arbitrary $\mathcal{L}$. To the best of our knowledge, this is the first time the compatibility between the $p$-adic and the mod $p$ LLC has been used to compute the reduction mod $p$ of semi-stable representations.

\section{Comparing Iwahori induction and spherical induction}\label{Comparison theorem section}
    In this section, we recall the Hecke algebra for the Iwahori induction of the mod $p$ character $d^r$. We also recall old comparison theorems and prove new ones between $\KZind V_{r}$ and $\IZind d^r$. The $0 < r < p - 1$ case is treated by Anandavardhanan and Borisagar in \cite{AB15}. We treat the cases $r = 0, \> p - 1$. This involves studying the corresponding non-commutative Iwahori-Hecke algebra.

\subsection{Recall of results in the commutative Hecke algebra case}

    It is well known (cf. \cite{BL95}) that if $0 < r < p - 1$, then the endomorphism algebra of the representation $\IZind d^r$ is the \emph{commutative Hecke algebra} generated by two operators $T_{-1, 0}$ and $T_{1, 2}$ satisfying
    \[
        T_{-1, 0}T_{1, 2} = 0 = T_{1, 2}T_{-1, 0}.
    \]
    These operators are defined by the following formulas (cf. \cite{AB15})
    \begin{eqnarray}\label{Formulas for T-10 and T12 in the commutative case}
        T_{-1, 0}[[\id, v]] = \sum_{\lambda \in I_1}\left[\left[\begin{pmatrix}p & \lambda \\ 0 & 1\end{pmatrix}, v\right]\right] \text{ and } T_{1, 2}[[\id, v]] = \sum_{\lambda \in I_1}\left[\left[\begin{pmatrix}1 & 0 \\ p\lambda & p\end{pmatrix}, v\right]\right],
    \end{eqnarray}
    where $v$ is a basis vector for the vector space underlying the representation $d^r$.

    Similarly, we have a Hecke algebra for representations induced from $KZ$ to $G$. The endomorphism algebra for the representation $\KZind V_r$ is generated by a single operator $T$. Its formula is given by
    \begin{eqnarray}\label{Spherical Hecke operator}
        T[\id, P(X, Y)] = \sum_{\lambda \in I_1}\left[\begin{pmatrix}p & \lambda \\ 0 & 1\end{pmatrix}, P(X, -\lambda X + pY)\right] + \left[\begin{pmatrix}1 & 0 \\ 0 & p\end{pmatrix}, P(pX, Y)\right].
    \end{eqnarray}
    
    The following results appear in \cite{AB15}:
    \begin{Proposition}\label{Kernel = Image in commutative Hecke algebra}
        We have
        \[
            \im T_{-1, 0} = \Ker T_{1, 2} \text{ and } \im T_{1, 2} = \Ker T_{-1, 0}.
        \]
    \end{Proposition}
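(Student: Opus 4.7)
The inclusions $\im T_{-1,0} \subseteq \Ker T_{1,2}$ and $\im T_{1,2} \subseteq \Ker T_{-1,0}$ follow immediately from the relation $T_{-1,0}T_{1,2} = 0 = T_{1,2}T_{-1,0}$. The real content is to establish the reverse inclusions, and since the argument is symmetric I would focus on $\Ker T_{1,2} \subseteq \im T_{-1,0}$.

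The plan is to work with the basis of $\IZind d^r$ coming from coset representatives for $G/IZ$. Geometrically $G/IZ$ is the set of oriented edges of the Bruhat--Tits tree of $\GLQp$, and the Iwahori decomposition $G = \bigsqcup_{n \in \bZ}\bigl(IZ \alpha^n IZ \sqcup IZ \alpha^n \beta IZ\bigr)$ puts a natural ``height'' function on this basis for which the formulas (\ref{Formulas for T-10 and T12 in the commutative case}) show that $T_{-1,0}$ raises height by $1$ and $T_{1,2}$ lowers height by $1$, each mapping a given basis vector to a sum of $p$ basis vectors in the neighbouring stratum. Given $f \in \Ker T_{1,2}$, I would write $f$ as a finite combination of basis vectors, decompose it by height, and build a preimage $g$ under $T_{-1,0}$ stratum by stratum: the condition $T_{1,2} f = 0$ at each height translates into linear relations on the coefficients of $f$ that are exactly what is needed to solve uniquely for the coefficients of $g$ using the shift formula for $T_{-1,0}$.

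The main obstacle is the combinatorial bookkeeping, namely verifying that at each stratum the image of $T_{-1,0}$ coincides with the hyperplane cut out by $T_{1,2} = 0$ rather than merely being contained in it. This reduces to a local computation at each vertex of the tree involving the $p$ outgoing edges together with the twist by the character $d^r$; the hypothesis $0 < r < p-1$ ensures that these local inversions are non-degenerate, which is presumably why the cases $r = 0$ and $r = p-1$ must be treated separately in the non-commutative Iwahori--Hecke setting that is the main subject of the sequel. Once the local matching is checked, the global equality $\Ker T_{1,2} = \im T_{-1,0}$ assembles from the strata-wise solutions, and $\im T_{1,2} = \Ker T_{-1,0}$ follows by the symmetric argument.
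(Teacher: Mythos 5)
The inclusions $\im T_{-1,0} \subseteq \Ker T_{1,2}$ and $\im T_{1,2} \subseteq \Ker T_{-1,0}$ are indeed immediate, but the height function on which your entire induction rests does not exist, and this is a fatal gap. By \eqref{Formulas for T-10 and T12 in the commutative case} and the geometric description in Lemmas~\ref{Small operator in the non-commutative Hecke algebra} and \ref{Big operator in the non-commutative Hecke algebra} (whose content is independent of the twist by $d^r$), $T_{-1,0}$ sends the basis vector attached to an oriented edge $e$ to the sum of the $p$ edges $e'$ with $t(e') = o(e)$ and $e' \neq \br{e}$, while $T_{1,2}$ sends it to the $p$ edges with $o(e') = t(e)$ and $e' \neq \br{e}$: both operators extend non-backtracking paths, one backwards and one forwards, so applied to the standard edge both land at distance one from it --- they do not move in opposite directions along any grading. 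Worse, neither operator shifts your double-coset stratification by a fixed amount: since $\alpha \cdot \mathrm{diag}(p,1) = p\,\mathrm{id} \in Z$, one computes
\[
T_{-1,0}[[\alpha, v]] \;=\; [[\id, v]] \;+\; \sum_{\lambda \in I_1 \setminus \{0\}} \left[\left[\begin{pmatrix} p & \lambda \\ 0 & p\end{pmatrix}, v\right]\right],
\]
where the first term lies in $IZ\cdot IZ$ and the remaining terms lie in $IZ\alpha\beta IZ$, two different strata. Even an edge-by-edge height $h$ with $h(e') = h(e) - 1$ on the support of $T_{-1,0}e$ is impossible: at a vertex $v$ with incident edges $e_0, \dots, e_p$ oriented towards $v$, the relation $T_{-1,0}\br{e_i} = \sum_{j \neq i} e_j$ forces (using $p + 1 \geq 3$) all $h(\br{e_i})$ to coincide and all $h(e_j)$ to equal that common value minus one; applying this at both endpoints of a single edge gives $h(\br{e}) = h(e) - 1$ and $h(e) = h(\br{e}) - 1$ simultaneously. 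So the stratum-by-stratum inversion cannot even be set up, and the ``local matching at each vertex'' is not a reduction of the problem.

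For calibration: the paper offers no proof of this proposition --- it is quoted from \cite{AB15} --- so the comparison is with the argument one actually needs, which is a \emph{one-sided} induction on the radius of the support rather than a two-sided grading. Given $f \in \Ker T_{1,2}$ supported in the ball of radius $n$ around the standard edge, the vanishing of $T_{1,2}f$ on the edges at radius $n + 1$ imposes linear conditions on the coefficients of $f$ at its extremal edges, and these conditions (via the character sum \eqref{sum of powers of roots of unity}, which is exactly where $0 < r < p - 1$ enters) are what allow one to subtract an element of $\im T_{-1,0}$ and strictly decrease the radius. Your instinct about where the hypothesis on $r$ intervenes is correct; it is the global combinatorial framework that has to be replaced.
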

    \begin{theorem}\label{Comparison theorem in commutative Hecke algebra}
        Let $0 < r < p - 1$. We have a $G$-equivariant isomorphism
        \[
            \KZind V_r \simeq \frac{\IZind d^r}{(T_{-1, 0})},
        \]
        which sends $[\id, x^r]$ to $[[\beta, 1]] + (T_{-1, 0})$. Under this isomorphism, the Hecke operator $T$ is mapped to $T_{1, 2}$.
    \end{theorem}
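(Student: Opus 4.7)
The strategy is to construct an explicit $G$-equivariant map $\Psi \colon \IZind d^r \to \KZind V_r$ via Frobenius reciprocity, show it annihilates $\mathrm{Im}(T_{-1, 0})$, and verify that the induced map $\overline{\Psi}$ is a bijection intertwining $T$ with $T_{1, 2}$; the isomorphism in the theorem will then be $\overline{\Psi}^{-1}$. The construction rests on the observation that $\beta$ normalizes $IZ$: since
\[
\beta^{-1}\begin{pmatrix} a & b \\ pc' & d \end{pmatrix}\beta = \begin{pmatrix} d & c' \\ pb & a \end{pmatrix},
\]
the vector $[\beta, X^r] \in \KZind V_r$ satisfies $i \cdot [\beta, X^r] = \overline{d}^{\,r} [\beta, X^r]$ for $i = \begin{pmatrix} a & b \\ pc' & d \end{pmatrix} \in IZ$; that is, it is $IZ$-semi-invariant with character $d^r$. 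Frobenius reciprocity then yields a $G$-map $\Psi$ with $\Psi([[\id, 1]]) = [\beta, X^r]$, and consequently $\Psi([[g, 1]]) = [g\beta, X^r]$ for every $g \in G$.

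Using formula \eqref{Formulas for T-10 and T12 in the commutative case}, the identity $\begin{pmatrix} p & \lambda \\ 0 & 1 \end{pmatrix}\beta = p \begin{pmatrix} \lambda & 1 \\ 1 & 0 \end{pmatrix}$, and the trivial action of $p \in Z$ on $V_r$, I compute
\[
\Psi(T_{-1, 0}[[\id, 1]]) = \sum_{\lambda \in I_1} \left[ \begin{pmatrix} \lambda & 1 \\ 1 & 0 \end{pmatrix}, X^r \right] = \left[ \id, \sum_{\lambda \in \Fp} (\lambda X + Y)^r \right].
\]
Expanding $(\lambda X + Y)^r$ and interchanging sums, the coefficient of $X^k Y^{r-k}$ is $\binom{r}{k} \sum_{\lambda \in \Fp} \lambda^k$, which vanishes in $\Fp$ for every $0 \leq k \leq r$ since $r < p - 1$. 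Hence $\Psi$ descends to $\overline{\Psi} \colon \IZind d^r / (T_{-1, 0}) \to \KZind V_r$, and $\overline{\Psi}([[\beta, 1]] + (T_{-1, 0})) = [\beta^2, X^r] = [\id, X^r]$ because $\beta^2 = p\cdot \id \in Z$ acts trivially.

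Surjectivity of $\overline{\Psi}$ is immediate: $[\id, X^r]$ lies in the image, $X^r$ generates $V_r$ as a $KZ$-representation by irreducibility of $V_r$ for $0 \leq r \leq p-1$, and $V_r$ in turn $G$-generates $\KZind V_r$. Injectivity is the main obstacle. The approach I would take is to restrict to the $KZ$-subrepresentation of $\IZind d^r$ generated by $[[\beta, 1]]$: by Mackey this is a copy of $\mathrm{ind}_{IZ}^{KZ}(a^r)$, the conjugate modular principal series, which fits into the non-split sequence $0 \to V_{p-1-r} \otimes \det^r \to \mathrm{ind}_{IZ}^{KZ}(a^r) \to V_r \to 0$. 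One verifies that $T_{-1, 0}[[\beta, 1]]$ generates precisely the subrepresentation $V_{p-1-r} \otimes \det^r$, so on the local piece at $v_0$ the quotient is $V_r$, matched isomorphically by $\overline{\Psi}$ with the $KZ$-slice $V_r \cdot [\id] \subset \KZind V_r$. Proposition \ref{Kernel = Image in commutative Hecke algebra} (giving $\mathrm{Im}(T_{-1, 0}) = \Ker T_{1, 2}$ and $\mathrm{Im}(T_{1, 2}) = \Ker T_{-1, 0}$) then allows one to propagate this local identification across the Bruhat-Tits tree by $G$-equivariance of $T_{-1, 0}$, yielding $\Ker \Psi = \mathrm{Im}(T_{-1, 0})$.

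For the intertwining $T \leftrightarrow T_{1, 2}$: in characteristic $p$, $P(pX, Y) = 0$ and $P(X, -\lambda X + pY) = X^r$ for $P = X^r$ with $r \geq 1$, so formula \eqref{Spherical Hecke operator} gives $T[\id, X^r] = \sum_{\lambda \in I_1}[\begin{pmatrix} p & \lambda \\ 0 & 1 \end{pmatrix}, X^r]$. On the other hand, using $\beta \begin{pmatrix} 1 & 0 \\ p\lambda & p \end{pmatrix} = p \begin{pmatrix} \lambda & 1 \\ 1 & 0 \end{pmatrix}$ and $\begin{pmatrix} \lambda & 1 \\ 1 & 0 \end{pmatrix}\beta = \begin{pmatrix} p & \lambda \\ 0 & 1 \end{pmatrix}$, a direct computation gives $\Psi(T_{1, 2}[[\beta, 1]]) = \sum_{\lambda \in I_1}[\begin{pmatrix} p & \lambda \\ 0 & 1 \end{pmatrix}, X^r] = T\Psi([[\beta, 1]])$. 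Since both endomorphisms are $G$-equivariant and $[\id, X^r]$ generates $\KZind V_r$ as a $G$-module, this single identity suffices.
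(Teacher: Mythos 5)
The paper offers no proof of Theorem~\ref{Comparison theorem in commutative Hecke algebra}: it is recalled verbatim from \cite{AB15}, so your argument can only be measured against that source. The constructive half of your proposal is sound and is the standard opening move: $[\beta, X^r]$ is indeed an $IZ$-eigenvector of weight $d^r$ (your conjugation formula is correct), the computation $\sum_{\lambda \in \Fp}(\lambda X + Y)^r = 0$ for $r < p-1$ correctly gives $\im T_{-1,0} \subseteq \Ker \Psi$, surjectivity is fine, and the identity $\Psi(T_{1,2}[[\beta,1]]) = T[\id, X^r]$ checks out --- though the reason a single identity suffices is that $[[\beta,1]]$ generates the \emph{source} $\IZind d^r$ as a $G$-module, not that $[\id, X^r]$ generates the target.

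The gap is injectivity, which is the entire substance of the theorem, and your sketch of it contains a concrete error. You assert that $T_{-1,0}[[\beta,1]]$ lies in, and generates the socle of, the $KZ$-slice $\IZKZind a^r$ of functions supported on $KZ\beta IZ$. It does not lie in that slice: $T_{-1,0}[[\beta,1]] = \sum_{\lambda}\bigl[\bigl[\beta\left(\begin{smallmatrix}p & \lambda \\ 0 & 1\end{smallmatrix}\right), 1\bigr]\bigr]$ is supported on matrices of determinant valuation $2$, hence on double cosets lying neither in $KZ$ nor in $KZ\beta IZ$. The element with the property you want is $T_{-1,0}[[\id,1]] = \sum_{\lambda}\bigl[\bigl[\left(\begin{smallmatrix}\lambda & 1 \\ 1 & 0\end{smallmatrix}\right)\beta, 1\bigr]\bigr]$, which does live in that slice. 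Even after this correction, ``propagate this local identification across the Bruhat--Tits tree by $G$-equivariance'' is not an argument: it is precisely the induction on the radius of support (via the Mackey decomposition of $\mathrm{Res}_{KZ}\IZind d^r$ into principal-series slices indexed by $KZ\backslash G/IZ$ and the uniseriality of each slice) that constitutes the bulk of the proof in \cite{AB15}, and you have not carried it out. Nor is it explained how Proposition~\ref{Kernel = Image in commutative Hecke algebra} enters: knowing $\im T_{-1,0} = \Ker T_{1,2}$ does not yield $\Ker\Psi \subseteq \im T_{-1,0}$, since from $\Psi(f) = 0$ one only deduces $T_{1,2}f \in \Ker\Psi$, not $T_{1,2}f = 0$. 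As written, you have established a $G$-equivariant surjection $\IZind d^r/(T_{-1,0}) \twoheadrightarrow \KZind V_r$ intertwining $T_{1,2}$ with $T$, but not that it is an isomorphism.
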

    \begin{theorem}\label{Flipping d and a in commutative Hecke algebra}
        For $0 < r < p - 1$, there is an isomorphism
        \[
            \IZind d^r \simeq \IZind a^r,
        \]
        which sends $[[\beta, 1]]$ to $[[\id, 1]]$. Under this isomorphism, $T_{-1, 0}$ is mapped to $T_{1, 2}$ and vice versa.
    \end{theorem}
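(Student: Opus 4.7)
My plan is to construct the isomorphism $\Phi$ directly from the normalizer property of $\beta$, and to verify the Hecke compatibility via a one-line matrix identity. The whole proof is essentially bookkeeping powered by the single structural observation that $\beta$ normalizes $IZ$ and that conjugation by $\beta$ swaps the characters $d^r$ and $a^r$ (literally swapping the roles of the $(1,1)$- and $(2,2)$-diagonal entries).

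I would first establish the normalizer property. A direct matrix computation gives
\[
\beta^{-1}\begin{pmatrix}a & b \\ c & d\end{pmatrix}\beta = \begin{pmatrix}d & c/p \\ bp & a\end{pmatrix},
\]
which lies in $I$ whenever $\begin{pmatrix}a & b \\ c & d\end{pmatrix}\in I$ (since $c\in p\Zp$ forces $c/p\in\Zp$ and $b\in\Zp$ forces $bp\in p\Zp$). Together with $\beta^2=p\cdot\id\in Z$ this shows $\beta$ normalizes $IZ$, and reading off diagonal entries yields $d^r(\beta^{-1}h\beta)=a^r(h)$ for every $h\in IZ$.

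Next, I would define $\Phi:\IZind d^r\to\IZind a^r$ on symbols by $\Phi([[g,v]]) := [[g\beta^{-1},v]]$. Well-definedness amounts to checking the relation $[[gh,v]]=[[g,d^r(h)\,v]]$ for $h\in IZ$, which follows from the previous step: since $\beta h\beta^{-1}\in IZ$,
\[
[[gh\beta^{-1},v]] = [[g\beta^{-1}\cdot(\beta h\beta^{-1}),v]] = [[g\beta^{-1}, a^r(\beta h\beta^{-1})\,v]] = [[g\beta^{-1},d^r(h)\,v]].
\]
$G$-equivariance is immediate since right multiplication by $\beta^{-1}$ commutes with left multiplication, and $[[g,v]]\mapsto[[g\beta,v]]$ provides a two-sided inverse (using $\beta^2\in Z$ acting trivially). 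Taking $g=\beta$ gives $\Phi([[\beta,1]])=[[\id,1]]$ as desired.

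Finally, I would verify the Hecke identity. The crux is the matrix identity
\[
\beta\begin{pmatrix}p & \lambda \\ 0 & 1\end{pmatrix}\beta^{-1} = \begin{pmatrix}1 & 0 \\ p\lambda & p\end{pmatrix},
\]
which converts the matrices in (\ref{Formulas for T-10 and T12 in the commutative case}) defining $T_{-1,0}$ into those defining $T_{1,2}$. Using $G$-equivariance of $T_{-1,0}$,
\[
\Phi(T_{-1,0}[[\beta,1]]) = \Phi\Big(\sum_\lambda[[\beta\begin{pmatrix}p & \lambda \\ 0 & 1\end{pmatrix},1]]\Big) = \sum_\lambda[[\beta\begin{pmatrix}p & \lambda \\ 0 & 1\end{pmatrix}\beta^{-1},1]] = T_{1,2}[[\id,1]] = T_{1,2}\Phi([[\beta,1]]).
\]
Since $[[\beta,1]]$ generates $\IZind d^r$ as a $G$-representation and everything in sight is $G$-equivariant, this extends to $\Phi\circ T_{-1,0}=T_{1,2}\circ\Phi$ globally; the symmetric argument yields $\Phi\circ T_{1,2}=T_{-1,0}\circ\Phi$. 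The only genuine subtlety is the well-definedness check in the third paragraph; everything else is a short matrix computation.
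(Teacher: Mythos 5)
Your proof is correct. The paper itself states this theorem without proof (it is quoted from [AB15]), and your argument --- that $\beta$ normalizes $IZ$, that conjugation by $\beta$ interchanges the characters $d^r$ and $a^r$, so that right translation by $\beta^{-1}$ gives the $G$-isomorphism, followed by the matrix identity $\beta\left(\begin{smallmatrix}p & \lambda \\ 0 & 1\end{smallmatrix}\right)\beta^{-1} = \left(\begin{smallmatrix}1 & 0 \\ p\lambda & p\end{smallmatrix}\right)$ to swap the two Hecke operators --- is exactly the standard argument one expects and checks out in every detail (the only cosmetic point being that your two conjugation formulas $d^r(\beta^{-1}h\beta)=a^r(h)$ and $a^r(\beta h\beta^{-1})=d^r(h)$ coincide because $\beta^2$ is central).
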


    \begin{remark}
        Theorem~\ref{Comparison theorem in commutative Hecke algebra} was later generalized by Anandavardhanan and Jana in \cite{AJ21} and \cite{Jan23}.
    \end{remark}

\subsection{New results in the non-commutative Hecke algebra case}
    If $r = 0$, $p - 1$, then the endomorphism algebra of $\IZind d^r = \IZind 1$ is no longer only generated by $T_{1, 2}$ and $T_{-1, 0}$. Indeed,
    the na\"ive generalization of Theorem \ref{Flipping d and a in commutative Hecke algebra} suggests that there is an extra endomorphism of $\IZind 1$ given by $[[\id, 1]] \mapsto [[\beta, 1]]$. In fact, this operator exists and is denoted by $T_{1, 0}$. 
    It is proved in \cite{BL95} that the corresponding Hecke algebra is non-commutative and turns out to be generated by 
    $T_{1, 0}$ and $T_{1, 2}$ subject to the relations
    \begin{eqnarray}\label{Relations in the non-commutative Hecke algebra}
        T_{1, 0}^2 = 1 \text{ and } T_{1, 2}T_{1, 0}T_{1, 2} = -T_{1, 2}.
    \end{eqnarray}
    
    We write down explicit formulas for these Iwahori-Hecke operators acting on compactly supported functions on the Bruhat-Tits tree $\Delta$ of $\SLQp$. Recall that the vertices of $\Delta$ correspond to homothety classes of lattices in $\Qp^2$. There is a `canonical' vertex, namely the one represented by $\Zp^2$. Two vertices in $\Delta$ are joined by an edge in $\Delta$ if they are represented by lattices $L$ and $L'$ such that $pL \subsetneq L' \subsetneq L$. We state some simple lemmas without proof. In the next lemma, by the term `oriented edge', we mean an ordered pair of adjacent vertices in $\Delta$.
    \begin{lemma}
            The oriented edges of $\Delta$ are in a one-to-one correspondence with $G/IZ$.
    \end{lemma}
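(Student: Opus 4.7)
The plan is a standard orbit-stabilizer argument. I would single out the ``canonical'' oriented edge $e_0 = (v_0, v_1)$ with $v_0 = [\Zp^2]$ and $v_1 = [\alpha \Zp^2] = [\Zp \oplus p\Zp]$; the chain $p\Zp^2 \subsetneq \Zp \oplus p\Zp \subsetneq \Zp^2$ confirms that $e_0$ really is an oriented edge. The task then reduces to showing (i) $G$ acts transitively on oriented edges of $\Delta$, and (ii) the stabilizer of $e_0$ in $G$ equals $IZ$. Granting these, the map $gIZ \longmapsto g \cdot e_0$ is the required bijection.

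For (i), I would first recall that $G$ acts transitively on vertices: every lattice $L \subset \Qp^2$ has a $\Zp$-basis, and the matrix sending the standard basis to such a basis carries $v_0$ to $[L]$. It then suffices to show that $\mathrm{Stab}_G(v_0) = KZ$ acts transitively on the $p+1$ edges out of $v_0$. These edges correspond to sublattices of index $p$ in $\Zp^2$, equivalently to lines in $\Zp^2/p\Zp^2 \cong \Fp^2$, and the $K$-action factors through the reduction $K \twoheadrightarrow \GL_2(\Fp)$, which acts transitively on $\mathbb{P}^1(\Fp)$.

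For (ii), any $g$ fixing both $v_0$ and $v_1$ lies in $KZ$; writing $g = zk$ with $k \in K$ and using that $Z$ acts trivially on homothety classes, the condition becomes $k(\Zp \oplus p\Zp) = \Zp \oplus p\Zp$. A direct computation with $k = \begin{pmatrix} a & b \\ c & d \end{pmatrix}$ shows $ke_1 = (a, c)$ lies in $\Zp \oplus p\Zp$ iff $c \in p\Zp$ (while $k(pe_2) = (pb, pd)$ is automatically there), so $k(\Zp \oplus p\Zp) \subset \Zp \oplus p\Zp$ iff $k \in I$; equality then follows from the index comparison $[\Zp^2 : k(\Zp \oplus p\Zp)] = p = [\Zp^2 : \Zp \oplus p\Zp]$. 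Thus $\mathrm{Stab}_G(e_0) = I \cdot Z = IZ$, as desired. The only subtle point is matching the orientation convention to the fact that the Iwahori is \emph{upper} triangular mod $p$: this is exactly what forces the choice $v_1 = [\Zp \oplus p\Zp]$ rather than $[p\Zp \oplus \Zp]$.
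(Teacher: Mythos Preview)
Your argument is correct and is exactly the standard orbit--stabilizer proof one would expect; the paper itself states this lemma without proof (``We state some simple lemmas without proof''), so there is nothing to compare against. Your choice of base edge $(\,[\Zp^2],\,[\alpha\Zp^2]\,)$ agrees with the paper's convention (cf.\ the identification of $[[g,1]]$ with the edge $g(\Zp^2,\alpha\Zp^2)$ just after the lemma), and your remark about the orientation matching the \emph{upper}-triangular Iwahori is on point.
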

    We identify elements in the representation $\IZind 1$ with functions on the oriented edges of $\Delta$ with finite support: The element $[[g, 1]]$ is identified with the function that maps the edge 
    $g (\Zp^2, \alpha \Zp^2)$ to $1$ and all other edges to $0$. In the following lemma, $\br{e}$ denotes the edge $e$ with reverse orientation.
    \begin{lemma}\label{Small operator in the non-commutative Hecke algebra}
        If $e$ is the edge corresponding to the function $[[g, 1]]$, then $\br{e}$ is the edge corresponding to the function $[[g\beta, 1]]$.
    \end{lemma}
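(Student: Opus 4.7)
The approach is to unfold the identification between $\IZind 1$ and the space of finitely supported functions on oriented edges of $\Delta$. By construction, $[[g, 1]]$ corresponds to the oriented edge whose ordered pair of vertices is given by the homothety classes of $g\Zp^2$ and $g\alpha\Zp^2$. Consequently, $[[g\beta, 1]]$ corresponds to the pair of homothety classes of $g\beta\Zp^2$ and $g\beta\alpha\Zp^2$, and the lemma reduces to checking that these two lattices are homothetic to $g\alpha\Zp^2$ and $g\Zp^2$ respectively.

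The required identifications follow from two short matrix computations. The columns of $\beta$ are $(0,p)^{\mathrm{T}}$ and $(1,0)^{\mathrm{T}}$, so
\[
\beta\Zp^2 \;=\; \Zp \cdot (1,0)^{\mathrm{T}} + p\Zp \cdot (0,1)^{\mathrm{T}} \;=\; \alpha\Zp^2,
\]
giving equality of lattices, not merely of homothety classes. Next, $\beta\alpha = \begin{pmatrix} 0 & p \\ p & 0 \end{pmatrix} = pw$, so $\beta\alpha\Zp^2 = pw\Zp^2 = p\Zp^2$, which is homothetic to $\Zp^2$. Multiplying on the left by $g$ preserves both identifications and yields the claim.

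There is no genuine obstacle: the lemma is essentially a two-line matrix verification, which presumably explains why it is stated without proof in the excerpt. The only subtlety to keep in mind is the distinction between literal lattice equality and equality of homothety classes — at the source vertex of the reversed edge we find equality of lattices on the nose, while at the target vertex we must genuinely pass to homothety classes via the scalar $p$.
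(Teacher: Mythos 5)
Your proof is correct: the identifications $\beta\Zp^2 = \alpha\Zp^2$ and $\beta\alpha\Zp^2 = p\Zp^2 \sim \Zp^2$ are exactly the verification the paper omits when it states this as one of the ``simple lemmas without proof,'' and your care in distinguishing lattice equality from homothety is appropriate. No issues.
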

    In the next lemma, $o(e)$ and $t(e)$ denote the origin and terminal of an edge $e$, respectively.
    \begin{lemma}\label{Big operator in the non-commutative Hecke algebra}
        If $e$ is the edge corresponding to $[[g, 1]]$, then the edges $e'$ with $o(e') = t(e)$ and $e' \neq \br{e}$ correspond to $\left[\left[g \matUp, 1 \right]\right]$, where $\lambda \in I_1$.
    \end{lemma}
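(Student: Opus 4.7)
The plan is to use the $G$-equivariance built into the identification of oriented edges with cosets in $G/IZ$ given just above. Since left multiplication by $g$ carries the reference edge $(\Zp^2, \alpha\Zp^2)$ to $e$ and is equivariant on both sides of the correspondence, it suffices to prove the lemma for $g = \id$; the general case follows by applying $g$ on the left.

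With $g = \id$ we have $t(e) = \alpha\Zp^2$, and we need to enumerate the $p$ edges $e' \neq \br{e}$ emanating from this vertex. Sublattices $L' \subset L := \alpha\Zp^2 = \langle e_1, pe_2\rangle$ of index $p$ biject with codimension-one $\Fp$-subspaces of $L/pL$, of which there are $p+1$. In the basis $\bar{e}_1, \overline{pe_2}$, these lines are $\Fp\cdot\overline{pe_2}$ and $\Fp\cdot(\bar{e}_1 + \lambda\overline{pe_2})$ for $\lambda \in \Fp$. The first yields the sublattice $\langle pe_1, pe_2\rangle = p\Zp^2$, which is homothetic to $\Zp^2$ and hence gives the edge $\br{e}$ (consistent with Lemma~\ref{Small operator in the non-commutative Hecke algebra}); the remaining $p$ lines yield the sublattices $L_\lambda := \langle e_1 + \lambda p e_2,\, p^2 e_2\rangle$ for $\lambda \in I_1$.

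To finish, I would observe that a direct matrix multiplication collapses the triple product to
\[
    \matUp = \begin{pmatrix}1 & 0 \\ p\lambda & p\end{pmatrix},
\]
and this matrix sends $\Zp^2$ to $\langle e_1 + p\lambda e_2,\, pe_2\rangle = \alpha\Zp^2$ and $\alpha\Zp^2$ to $\langle e_1 + p\lambda e_2,\, p^2 e_2\rangle = L_\lambda$. Thus as $\lambda$ ranges over $I_1$, the edges $g\cdot\matUp\cdot(\Zp^2, \alpha\Zp^2)$ are exactly the $p$ edges $e' \neq \br{e}$ with $o(e') = t(e)$, proving the lemma. There is no serious obstacle; the only care required is verifying the matrix identity and confirming that the parametrization by $\lambda \in I_1$ matches the enumeration of hyperplanes in $L/pL$ distinct from $\Fp\cdot\overline{pe_2}$. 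As a bonus, this computation explains why $\matUp$ appears: it recovers exactly the coset representatives $\begin{pmatrix}1 & 0 \\ p\lambda & p\end{pmatrix}$ featured in formula (\ref{Formulas for T-10 and T12 in the commutative case}) for $T_{1,2}$, foreshadowing a tree-theoretic interpretation of that Hecke operator.
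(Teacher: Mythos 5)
Your proof is correct and complete; the paper itself states this lemma without proof (it is one of the ``simple lemmas'' the author explicitly declines to prove), and your argument is exactly the standard verification one would supply. Reducing to $g = \id$ by the $G$-equivariance of the identification with $G/IZ$, enumerating the $p+1$ index-$p$ sublattices of $\alpha\Zp^2$ via lines in $\alpha\Zp^2/p\alpha\Zp^2$, and checking that $\matUp = \left(\begin{smallmatrix}1 & 0 \\ p\lambda & p\end{smallmatrix}\right)$ carries the reference edge $(\Zp^2, \alpha\Zp^2)$ to $(\alpha\Zp^2, \langle e_1 + p\lambda e_2, p^2 e_2\rangle)$ is all that is needed, and your matrix identity is the same one the author uses implicitly in the proof of Theorem~\ref{Comparison theorem in the non-commutative Hecke algebra case}.
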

    \noindent Using \cite[Lemma 6]{BL95} and Lemmas \ref{Small operator in the non-commutative Hecke algebra}, \ref{Big operator in the non-commutative Hecke algebra} the formulas for $T_{1, 0}$ and $T_{1, 2}$ are given by
    \begin{eqnarray}
        T_{1, 0}[[\id, 1]] = [[\beta, 1]] \text{ and } T_{1, 2}[[\id, 1]] = \sum_{\lambda \in I_1}\left[\left[\begin{pmatrix}1 & 0 \\ p\lambda & p\end{pmatrix}, 1\right]\right].
    \end{eqnarray}
    \begin{remark}
    \begin{itemize}
    \item By Frobenius reciprocity, the Iwahori-Hecke algebra $\mathrm{End}_G(\IZind 1)$ is identified with the space $\mathrm{Hom}_{IZ}(1, \mathrm{Res}^{G}_{IZ}\IZind 1)$. Since the action of $G$ on functions in $\IZind 1$ is by right translations, this last space is isomorphic to functions in $\IZind 1$ that are right invariant under $IZ$. In other words, we can identify elements in this Iwahori-Hecke algebra with functions supported on a finite number of double cosets of $IZ$ in $G$. Under this identification, $T_{1, 0}$ and $T_{1, 2}$ correspond to $\mathbbm{1}_{IZ\beta I}$ and $\mathbbm{1}_{IZ\alpha^{-1}I}$, respectively, as explained in \cite{BL95}.
    
    \item The formula for $T_{1, 0}$ changes if we replace $\IZind 1$ by $\IZind \omega^s$ for any $0 \leq s \leq p - 2$ but the formula for $T_{1, 2}$ remains the same. Indeed, we have
    \begin{eqnarray}\label{Formulas for T10 and T12 in the non-commutative case}
        T_{1, 0}[[\id, 1]] = [[\beta, (-1)^{s}]] \text{ and } T_{1, 2}[[\id, 1]] = \sum_{\lambda \in I_1}\left[\left[\begin{pmatrix}1 & 0 \\ p\lambda & p\end{pmatrix}, 1\right]\right].
    \end{eqnarray}
    This follows from the projection formula which states that the map $\IZind \omega^s \to \omega^s \otimes (\IZind 1)$ sending $[[g, v]]$ to $gv \otimes [[g, 1]]$ is an isomorphism of $G$-representations.
    \end{itemize}
    \end{remark}

    The operator $T_{-1, 0}$ satisfies the relation (see \cite[Lemma 8(4)]{BL95})
    \begin{eqnarray}\label{T-10 in the non-commutative case}
        T_{-1, 0} = T_{1, 0}T_{1, 2}T_{1, 0}.
    \end{eqnarray}
    The formula for $T_{-1, 0}$ continues to be given by \eqref{Formulas for T-10 and T12 in the commutative case}.

    We now extend the results of \cite{AB15} to this non-commutative setting. We will use the following classical lemma from, e.g., \cite{GJ23}:
    \begin{lemma}[{\cite[Theorem 1.1]{GJ23}}]\label{The psi function}
        For $r \geq p + 1$, the map
        \[
            \psi : V_r \to \ind_{\mathrm{B}(\Fp)}^{\GL_2(\Fp)} d^r
        \]
        that sends a polynomial $P \in V_r$ to the function $\psi_P : \GL_2(\Fp) \to \Fp$ defined by
        \[
            \psi_P\left(\begin{pmatrix}a & b \\ c & d\end{pmatrix}\right) = P(c, d)
        \]
        induces an isomorphism of $\GL_2(\Fp)$-representations
        \[
            V_r/V_r^* \xrightarrow{\sim} \ind_{\mathrm{B}(\Fp)}^{\GL_2(\Fp)} d^r.
        \]
    \end{lemma}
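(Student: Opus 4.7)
The plan is to verify that $\psi$ is a well-defined, $\GL_2(\Fp)$-equivariant linear map that kills $V_r^*$, and then conclude by combining a dimension count with an injectivity argument for the induced map.

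First, to check $\psi_P$ really lies in the induction, let $b = \begin{pmatrix}s & t \\ 0 & u\end{pmatrix} \in \mathrm{B}(\Fp)$ and $g = \begin{pmatrix}a & b' \\ c & d\end{pmatrix}$. The bottom row of $bg$ is $(uc, ud)$, so
\[
    \psi_P(bg) = P(uc, ud) = u^r P(c, d) = d^r(b)\, \psi_P(g)
\]
by homogeneity of $P$, as required. $\GL_2(\Fp)$-equivariance is immediate once one writes the action on $V_r$ through right multiplication of $g$ on the row vector $(X, Y)$.

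Next, for any $P = (X^pY - XY^p)Q \in V_r^*$ and any $(c, d) \in \Fp^2$, Fermat's little theorem gives $c^p d - c d^p = cd - cd = 0$, so $\psi_P \equiv 0$ and $\psi$ factors through $V_r/V_r^*$. A direct dimension count gives $\dim V_r = r+1$, $\dim V_r^* = r - p$ (identifying $V_r^*$ with $V_{r - p - 1}$ via multiplication by $X^pY - XY^p$), and $\dim \ind_{\mathrm{B}(\Fp)}^{\GL_2(\Fp)} d^r = [\GL_2(\Fp) : \mathrm{B}(\Fp)] = p + 1 = \dim V_r/V_r^*$. It therefore suffices to show $\Ker \psi \subseteq V_r^*$.

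If $\psi_P = 0$, then $P(c, d) = 0$ for every nonzero $(c, d) \in \Fp^2$, since any such pair occurs as the bottom row of some invertible matrix; and the value at $(0, 0)$ vanishes for free since $r \geq 1$. Using the factorization
\[
    X^pY - XY^p = XY\prod_{\alpha \in \Fp^*}(X - \alpha Y),
\]
which is, up to a unit, the product of one linear form per point of $\mathbb{P}^1(\Fp)$, each such linear form divides $P$ because $P$ vanishes on the corresponding line through the origin in $\Fp^2$; since these $p+1$ linear forms are pairwise coprime in the UFD $\Fp[X, Y]$, their product $X^pY - XY^p$ divides $P$, giving $P \in V_r^*$. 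The only nontrivial input in this proof is this last factorization step; everything else is bookkeeping.
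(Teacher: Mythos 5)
Your proof is correct. Note that the paper itself does not prove this lemma: it is quoted as \cite[Theorem 1.1]{GJ23}, so there is no internal argument to compare against. Your write-up is the standard (and essentially the only natural) argument: check $\psi_P$ transforms correctly under $\mathrm{B}(\Fp)$ via the bottom-row computation, check equivariance, observe $\theta = X^pY - XY^p$ vanishes identically on $\Fp^2$ by Fermat so that $V_r^* \subseteq \Ker\psi$, match dimensions ($\dim V_r/V_r^* = (r+1)-(r-p) = p+1 = [\GL_2(\Fp):\mathrm{B}(\Fp)]$, using $r \ge p+1$ so that $V_r^* \cong \theta\cdot V_{r-p-1}$ makes sense), and show $\Ker\psi \subseteq V_r^*$ via the factorization of $\theta$ into the $p+1$ pairwise coprime linear forms indexed by $\mathbb{P}^1(\Fp)$. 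The one place where you should add a word is the step ``each such linear form divides $P$ because $P$ vanishes on the corresponding line'': over a finite field this implication is \emph{false} for general polynomials (e.g.\ $X^p - X$ vanishes on all of $\Fp^2$ but is not divisible by $Y$), and it is the homogeneity of $P$ that saves you --- restricting $P \in V_r$ to the line spanned by $v_0$ gives $t \mapsto t^r P(v_0)$, so $P(v_0)=0$ forces the remainder of $P$ upon division by the linear form cutting out that line to vanish identically, not merely on $\Fp$-points. With that clause inserted, the argument is complete.
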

    \noindent We identify the $KZ$-representation obtained by inflating the $\GL_2(\Fp)$-representation $\ind_{\mathrm{B}(\Fp)}^{\GL_2(\Fp)} d^r$ along the reduction mod $p$ map $K \to \GL_2(\Fp)$ (and making the scalar matrix $p$ act trivially) with $\IZKZind d^r$ since the subgroup $I$ reduces mod $p$ to $\mathrm{B}(\Fp)$.
    
    We first prove an analog of Proposition \ref{Kernel = Image in commutative Hecke algebra}.
    \begin{Proposition}
        We have
        \[
            \Ker T_{1, 2}T_{1, 0} = \im (1 + T_{1, 2}T_{1, 0}) \text{ and } \Ker (1 + T_{1, 2}T_{1, 0}) = \im T_{1, 2}T_{1, 0}.
        \]
    \end{Proposition}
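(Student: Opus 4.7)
Set $S := T_{1,2}T_{1,0}$, viewed as an endomorphism of $\IZind 1$. The plan is to reduce the proposition to the single algebraic identity
\[
    S^2 = -S,
\]
after which both equalities fall out by elementary linear algebra. To establish this identity I use the relations \eqref{Relations in the non-commutative Hecke algebra}: multiplying $T_{1,2}T_{1,0}T_{1,2} = -T_{1,2}$ on the right by $T_{1,0}$ gives $T_{1,2}T_{1,0}T_{1,2}T_{1,0} = -T_{1,2}T_{1,0}$, i.e.\ $S^2 = -S$. Equivalently, $S(S+1) = 0 = (S+1)S$.

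From $S(S+1) = 0$ we immediately get $\im(1+S) \subseteq \Ker S$, and from $(S+1)S = 0$ we get $\im S \subseteq \Ker(1+S)$. For the reverse inclusions: if $x \in \Ker S$, then $(1+S)x = x + Sx = x$, so $x = (1+S)x \in \im(1+S)$; and if $x \in \Ker(1+S)$, then $Sx = -x$, so $x = S(-x) \in \im S$. This completes both equalities.

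There is no genuine obstacle here: once one notices that the noncommutative relation $T_{1,2}T_{1,0}T_{1,2} = -T_{1,2}$ forces the \emph{composite} $S = T_{1,2}T_{1,0}$ to satisfy $S^2 + S = 0$, the proposition is a purely formal statement about an operator whose minimal polynomial divides $X(X+1)$. The only subtlety worth flagging is that this mirrors Proposition~\ref{Kernel = Image in commutative Hecke algebra} from the commutative case, where the analogous identity $T_{-1,0}T_{1,2} = 0 = T_{1,2}T_{-1,0}$ plays the role of $S^2 = -S$; thus the proposition should be presented as the correct non-commutative replacement of that earlier result, with $T_{-1,0}$ replaced by $T_{1,2}T_{1,0}$ and $T_{1,2}$ replaced by $1 + T_{1,2}T_{1,0}$.
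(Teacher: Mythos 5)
Your proof is correct and follows essentially the same route as the paper: the paper notes that the relation $T_{1,2}T_{1,0}T_{1,2} = -T_{1,2}$ makes $-T_{1,2}T_{1,0}$ an idempotent and then invokes the standard theory of idempotent endomorphisms, which is exactly the linear algebra you spell out from $S^2 = -S$. No gaps.
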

    \begin{proof}
        Note that the second identity in \eqref{Relations in the non-commutative Hecke algebra} implies that the operator $-T_{1, 2}T_{1, 0}$ is an idempotent. This proposition then follows from the theory of idempotent endomorphisms.
    \end{proof}
    The following remark is the analog of Theorem \ref{Flipping d and a in commutative Hecke algebra}.
    \begin{remark}\label{Order 2 automorphism}
        The map $T_{1, 0}$ induces an isomorphism $\IZind 1 \to \IZind 1$. Under this map, the operator $T_{1, 2}$ is mapped to $T_{-1, 0}$ and vice versa. This is because of relation \eqref{T-10 in the non-commutative case} and $T_{1, 0}^2 = 1$.
    \end{remark}

    Before proving the analog of Theorem \ref{Comparison theorem in commutative Hecke algebra}, we state and prove a lemma.
    \begin{lemma}\label{Direct sum of V0 and Vp-1}
        The representation $V_{2p - 2}/V_{2p - 2}^*$ splits as a direct sum $V_{p - 1} \oplus V_0$. The copies of $V_{p - 1}$ and $V_{0}$ in $V_{2p - 2}/V_{2p - 2}^*$ are the subspaces generated by $X^{2p - 2}$ and $X^{2p - 2} - X^{p - 1}Y^{p - 1} + Y^{2p - 2}$, respectively.
    \end{lemma}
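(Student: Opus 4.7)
The plan is to apply Lemma~\ref{The psi function} with $r = 2p-2$ and then read off the decomposition on the principal series side. Since $(p-1) \mid (2p-2)$, the character $d^{2p-2}$ is trivial on $\mathrm{B}(\Fp)$, so the lemma provides a $\GL_2(\Fp)$-equivariant isomorphism
\[
    \psi : V_{2p-2}/V_{2p-2}^* \xrightarrow{\sim} \ind_{\mathrm{B}(\Fp)}^{\GL_2(\Fp)} 1.
\]
The target is the permutation representation of $\GL_2(\Fp)$ on $\mathrm{B}(\Fp)\backslash \GL_2(\Fp) \simeq \mathbb{P}^1(\Fp)$. Because $p + 1 \not\equiv 0 \pmod p$, the augmentation (summation) map is nonzero on the line of constant functions, so this permutation representation splits as $V_0 \oplus V_{p-1}$, with $V_0$ the line of constants and $V_{p-1}$ the Steinberg, realized as the kernel of the augmentation.

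Next, I would identify each summand inside $V_{2p-2}/V_{2p-2}^*$ by evaluating $\psi$ on the two proposed generators. Writing the bottom row of $g$ as $(c, d)$, one has $\psi_{X^{2p-2}}(g) = c^{2p-2}$, which equals $1$ when $c \neq 0$ and $0$ when $c = 0$; summed over the $p+1$ points of $\mathbb{P}^1(\Fp)$ this yields $p \equiv 0 \pmod p$, so $\psi_{X^{2p-2}}$ lies in the Steinberg. Since it is nonzero and $V_{p-1}$ is irreducible, its $\GL_2(\Fp)$-span is exactly the $V_{p-1}$-summand.

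For $Q = X^{2p-2} - X^{p-1}Y^{p-1} + Y^{2p-2}$, the function $\psi_Q(g) = c^{2p-2} - c^{p-1}d^{p-1} + d^{2p-2}$ equals $1$ at every point of $\mathbb{P}^1(\Fp)$: a quick case split according to whether $c$ or $d$ vanishes confirms this using $e^{p-1} = 1$ for $e \in \Fp^*$. Hence $\psi_Q$ is the constant function $1$, and it spans the $V_0$-summand. The dimensions match, since $\dim V_0 + \dim V_{p-1} = p+1 = \dim V_{2p-2}/V_{2p-2}^*$, and because $V_0$ and $V_{p-1}$ are non-isomorphic irreducibles the sum is automatically direct, giving the claimed decomposition with the stated generators.

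I do not anticipate a substantive obstacle: the nontrivial ingredient, the isomorphism $\psi$, is given by Lemma~\ref{The psi function}, and the rest reduces to two short evaluations on $\mathbb{P}^1(\Fp)$. The only mild point to verify is that the augmentation map is nonzero on constants (i.e.\ $p+1 \not\equiv 0$), which is what makes the principal series split rather than merely contain $V_0$ as a subrepresentation; without this one would only get a non-split extension and the clean direct sum would fail.
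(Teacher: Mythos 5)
Your proof is correct, but it takes a different route from the paper's for the key structural steps. The paper obtains the splitting from Breuil's exact sequence $0 \to V_{p-1} \to V_{2p-2}/V_{2p-2}^* \to V_0 \to 0$ (\cite[Lemme 5.1.3 (ii)]{Bre03b}) together with the projectivity, hence injectivity, of $V_{p-1}$ as a $\GL_2(\Fp)$-representation; that same citation also hands it the generator $X^{2p-2}$ of the $V_{p-1}$-copy, and Lemma~\ref{The psi function} is invoked only to locate the $V_0$-copy via the constant functions. You instead transport everything through $\psi$ to $\ind_{\mathrm{B}(\Fp)}^{\GL_2(\Fp)} 1$ and split it there by hand, using that the augmentation map is nonzero on constants (as $p+1 \equiv 1 \not\equiv 0 \bmod p$), and then verify by direct evaluation on $\mathbb{P}^1(\Fp)$ that $\psi_{X^{2p-2}}$ lies in the augmentation kernel while $\psi_{X^{2p-2} - X^{p-1}Y^{p-1} + Y^{2p-2}}$ is the constant function $1$. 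Your argument is more self-contained and computational (it avoids both the Breuil citation and the projectivity of $V_{p-1}$), at the modest cost of quietly invoking two classical facts: that the augmentation kernel, i.e.\ the Steinberg representation of $\GL_2(\Fp)$, is isomorphic to $V_{p-1}$, and that it is irreducible --- the latter being needed for your step ``nonzero, hence generates the whole summand.'' Both are standard, so there is no gap; it would just be worth stating them explicitly, since they are doing the work that projectivity and the exact sequence do in the paper's version.
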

    \begin{proof}
        By \cite[Lemme 5.1.3 (ii)]{Bre03b}, we have the following exact sequence
        \[
            0 \to V_{p - 1} \to V_{2p - 2}/V_{2p - 2}^* \to V_{0} \to 0,
        \]
        which we think of as an exact sequence of $\GL_2(\Fp)$-representations.
        Since $V_{p - 1}$ is a projective, and hence an injective $\GL_2(\Fp)$-representation, this sequence splits. The injection of $V_{p - 1}$ into $V_{2p - 2}/V_{2p - 2}^*$ is given by sending the generator $X^{p - 1}$ to $X^{2p - 2}$ by \cite[Lemme 5.1.3 (ii)]{Bre03b}. Next, note that there is a unique copy of $V_0$ in $V_{2p - 2}/V_{2p - 2}^*$. Using Lemma \ref{The psi function}, we see that $$V_{2p - 2}/V_{2p - 2}^* \simeq \mathrm{ind}_{B(\Fp)}^{\GL_2(\Fp)} 1.$$ Clearly, the copy of $V_0$ in $\mathrm{ind}_{B(\Fp)}^{\GL_2(\Fp)} 1$ is given by the constant functions. A short computation shows that under the explicit isomorphism displayed above, the polynomial $X^{2p - 2} - X^{p - 1}Y^{p - 1} + Y^{2p - 2}$ maps to the constant function $1$.
    \end{proof}
    
    \begin{theorem}\label{Comparison theorem in the non-commutative Hecke algebra case}
        The map
        \begin{eqnarray*}
            \IZind 1 & \to & \KZind V_{2p - 2}/V_{2p - 2}^* \\
            \left. [[\id, 1]] \right. & \mapsto & [\id, Y^{2p - 2} - X^{p - 1}Y^{p - 1}]
        \end{eqnarray*}
        induces isomorphisms
        \begin{eqnarray*}
            \frac{\IZind 1}{(T_{1, 2}T_{1, 0})} \simeq \KZind V_0 & \text{ and } & \frac{\IZind 1}{(1 + T_{1, 2}T_{1, 0})} \simeq \KZind V_{p - 1}.
        \end{eqnarray*}
            Moreover, the operators $T_{-1, 0} + T_{1, 0}$ and $T_{-1, 0}$ on the left
            correspond to the operator $T$ on 
            the right under the isomorphisms mentioned above.
    \end{theorem}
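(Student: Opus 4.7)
The plan is to pass through the natural isomorphism
\[
\IZind 1 \,\simeq\, \KZind(\IZKZind 1) \,\simeq\, \KZind(V_{2p-2}/V_{2p-2}^*)
\]
provided by transitivity of induction and Lemma \ref{The psi function}, and then to exploit the splitting from Lemma \ref{Direct sum of V0 and Vp-1} to compute the quotients. First I would verify that the map of the theorem, which I denote by $\phi$, is well-defined: by Frobenius reciprocity this reduces to checking that $Y^{2p-2} - X^{p-1}Y^{p-1}$ is $IZ$-fixed in $V_{2p-2}/V_{2p-2}^*$, and writing it as $(X^{2p-2} - X^{p-1}Y^{p-1} + Y^{2p-2}) - X^{2p-2}$ shows each summand is $I$-fixed by Lemma \ref{Direct sum of V0 and Vp-1}. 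A direct computation shows $\psi_{Y^{2p-2} - X^{p-1}Y^{p-1}}\begin{pmatrix}a & b\\c & d\end{pmatrix} = d^{p-1}(d^{p-1} - c^{p-1})$ equals the characteristic function of $\mathrm{B}(\Fp) \subset \GL_2(\Fp)$, which identifies the polynomial with the image under $\psi$ of the standard generator $[[[\id, 1]]] \in \IZKZind 1$; thus $\phi$ coincides with the transitivity isomorphism and is already an isomorphism.

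Under the splitting $\KZind V_{2p-2}/V_{2p-2}^* \simeq \KZind V_{p-1} \oplus \KZind V_0$, the image $\phi([[\id, 1]])$ decomposes as $(-[\id, X^{p-1}], [\id, 1])$. The central computation is
\[
T_{1, 2}T_{1, 0}[[\id, 1]] = T_{1, 2}[[\beta, 1]] = \sum_{\lambda \in I_1} \left[\left[\beta \begin{pmatrix}1 & 0\\p\lambda & p\end{pmatrix}, 1\right]\right] = \sum_{\lambda \in I_1} \left[\left[\begin{pmatrix}\lambda & 1\\1 & 0\end{pmatrix}, 1\right]\right],
\]
after using $G$-equivariance of $T_{1, 2}$, simplifying $\beta \begin{pmatrix}1 & 0\\p\lambda & p\end{pmatrix} = p \begin{pmatrix}\lambda & 1\\1 & 0\end{pmatrix}$, and absorbing the central $p$. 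Since these matrices lie in $K$, applying $\phi$ yields $\sum_\lambda [\id, g_\lambda \cdot (Y^{2p-2} - X^{p-1}Y^{p-1})]$ with $g_\lambda = \begin{pmatrix}\lambda & 1\\1 & 0\end{pmatrix}$; expanding the action and applying the power-sum identity $\sum_{\lambda \in \Fp} \lambda^k = -\delta_{k, p-1}$ for $0 \le k \le p-1$ collapses this to $[\id, X^{2p-2}]$, lying entirely in the $V_{p-1}$ summand. Hence $\phi(T_{1, 2}T_{1, 0}[[\id, 1]])$ generates $\KZind V_{p-1}$ while $\phi((1 + T_{1, 2}T_{1, 0})[[\id, 1]]) = (0, [\id, 1])$ generates $\KZind V_0$. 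Since $\phi$ is an isomorphism and $\IZind 1$ is cyclically generated by $[[\id, 1]]$, the ideals $(T_{1, 2}T_{1, 0})$ and $(1 + T_{1, 2}T_{1, 0})$ cut out the $\KZind V_{p-1}$ and $\KZind V_0$ summands respectively, yielding the two claimed isomorphisms.

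For the Hecke-operator correspondence I would compute $T_{-1, 0}[[\id, 1]] = \sum_\lambda \left[\left[\begin{pmatrix}p & \lambda\\0 & 1\end{pmatrix}, 1\right]\right]$ and $T_{1, 0}[[\id, 1]] = [[\beta, 1]]$, translate via $\phi$, and compare with the formula \eqref{Spherical Hecke operator} for $T$. Writing $\beta = \begin{pmatrix}1 & 0\\0 & p\end{pmatrix} w$ with $w \in K$ fixing $1 \in V_0$, the $\KZind V_0$-component of $\phi(T_{1, 0}[[\id, 1]])$ is $\left[\begin{pmatrix}1 & 0\\0 & p\end{pmatrix}, 1\right]$, which is exactly the extra term in $T[\id, 1]$ beyond $\sum_\lambda \left[\begin{pmatrix}p & \lambda\\0 & 1\end{pmatrix}, 1\right]$, so $T_{-1, 0} + T_{1, 0} \leftrightarrow T$ on $\KZind V_0$. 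On the $\KZind V_{p-1}$-component, the second term of $T[\id, X^{p-1}]$ vanishes since $(pX)^{p-1} = 0$ in characteristic $p$, leaving $\sum_\lambda \left[\begin{pmatrix}p & \lambda\\0 & 1\end{pmatrix}, X^{p-1}\right]$, which matches the image of $T_{-1, 0}[[\id, 1]]$ up to the sign coming from $\phi([[\id, 1]])$'s $V_{p-1}$-component being $-[\id, X^{p-1}]$.

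The main obstacle is the polynomial identity $\sum_\lambda g_\lambda \cdot (Y^{2p-2} - X^{p-1}Y^{p-1}) \equiv X^{2p-2}$ modulo $V_{2p-2}^*$: one must show that the $V_0$-component of the sum vanishes and that the $V_{p-1}$-component is exactly $X^{2p-2}$. This is delicate because the direct sum decomposition $V_{2p-2}/V_{2p-2}^* = V_{p-1} \oplus V_0$ holds only after quotienting by the Steinberg-type subspace, so the explicit generators from Lemma \ref{Direct sum of V0 and Vp-1} must be tracked carefully; the key input is the power-sum vanishing identity in $\Fp$.
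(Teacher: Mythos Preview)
Your proposal is correct and follows essentially the same route as the paper: establish that $\phi$ is an isomorphism via transitivity of induction and Lemma~\ref{The psi function}, match the idempotent splitting of $\IZind 1$ with the decomposition of Lemma~\ref{Direct sum of V0 and Vp-1} via the key computation $\phi(T_{1,2}T_{1,0}[[\id,1]])=[\id,X^{2p-2}]$ using the power-sum identity \eqref{sum of powers of roots of unity}, and then verify the Hecke-operator correspondence on the generator. The only cosmetic differences are that the paper first computes $\phi(T_{1,2}[[\id,1]])=[\beta,X^{2p-2}]$ and then translates by $\beta$, and that it checks explicitly (via \eqref{Relations in the non-commutative Hecke algebra} and \eqref{T-10 in the non-commutative case}) that $T_{-1,0}+T_{1,0}$ and $T_{-1,0}$ kill the respective subspaces before passing to the quotient, whereas your comparison of $G$-equivariant maps on $[[\id,1]]$ handles this implicitly.
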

        \begin{proof}
            We first prove that the map
            \begin{eqnarray*}
                \IZind 1 & \to & \KZind V_{2p - 2}/V_{2p - 2}^* \\
                \left. [[\id, 1]] \right. & \mapsto & [\id, Y^{2p - 2} - X^{p - 1}Y^{p - 1}]
            \end{eqnarray*}
            is an isomorphism. Since $IZ \subseteq KZ \subseteq G$ with $IZ \subseteq KZ$ of finite index, $\IZind 1$ is isomorphic to $\KZind \IZKZind 1$ by sending
            \[
                [[\id, 1]] \mapsto \big[\id, [[[\id, 1]]]\big],
            \]
            where single brackets denote functions in $\KZind 1$, double brackets denote functions in $\IZind 1$ and triple brackets denote functions in $\IZKZind 1$. A short computation using Lemma~\ref{The psi function} shows that the map sending $Y^{2p - 2} - X^{p - 1}Y^{p - 1}$ to $[[[\id, 1]]]$ is an isomorphism between $V_{2p - 2}/V_{2p - 2}^*$ and $\IZKZind 1$.
        
            Using the relation $T_{1, 2}T_{1, 0}T_{1, 2} = -T_{1, 2}$ in the non-commutative Hecke algebra for $\IZind 1$, we see that the operator $-T_{1, 2}T_{1, 0}$ is an idempotent. Therefore we get a splitting 
            \[
                \IZind 1 = \im (-T_{1, 2}T_{1, 0}) \oplus \im (1 + T_{1, 2}T_{1, 0}).
            \]
            We also know from Lemma~\ref{Direct sum of V0 and Vp-1} that
            \[
                \KZind V_{2p - 2}/V_{2p - 2}^* = \KZind V_{p - 1} \oplus \KZind V_{0},
            \]
            where $\KZind V_{p - 1}$ is identified with the subspace generated by $[\id, X^{2p - 2}]$ and $\KZind V_{0}$ is identified with the subspace generated by  $[\id, X^{2p - 2} - X^{p - 1}Y^{p - 1} + Y^{2p - 2}]$. Therefore to prove the first part of this theorem, we need to show that
            \begin{itemize}
                \item $\im T_{1, 2}T_{1, 0} \mapsto \KZind V_{p - 1}$,
                \item $\im (1 + T_{1, 2}T_{1, 0}) \mapsto \KZind V_{0}.$
            \end{itemize}
            Indeed, under the map $\IZind 1 \to \KZind V_{2p - 2}/V_{2p - 2}^*$, the element $T_{1, 2}[[\id, 1]]$ maps to
            \[
                \sum_{\lambda \in I_1}\left[\begin{pmatrix}1 & 0 \\ p\lambda & p\end{pmatrix}, Y^{2p - 2} - X^{p - 1}Y^{p - 1}\right] = \sum_{\lambda \in I_1}\left[\beta\begin{pmatrix}1 & \lambda \\ 0 & 1\end{pmatrix}w, Y^{2p - 2} - X^{p - 1}Y^{p - 1}\right].
            \]
            Since $\begin{pmatrix}1 & \lambda \\ 0 & 1\end{pmatrix}w \in KZ$, we transfer it to the other side to get
            \[
                \sum_{\lambda \in I_1}[\beta, X^{2p - 2} - (\lambda X + Y)^{p - 1}X^{p - 1}].
            \]
            Expanding $(\lambda X + Y)^{p - 1}$ and summing over $\lambda$ using 
            the identity 
            \begin{eqnarray}\label{sum of powers of roots of unity}
            \sum_{i = 0}^{p - 1}[i]^j = 
            \begin{cases}
                0 & \text{ if } p - 1 \nmid j \\
                p - 1 & \text{ if } p - 1 \mid j,
                \end{cases}
                & \text{ for $j \geq 1$,}
            \end{eqnarray}
            we see that
            \begin{eqnarray}\label{Image of T12 maps to}
                T_{1, 2}[[\id, 1]] \mapsto [\beta, X^{2p - 2}].
            \end{eqnarray}
            \begin{itemize}
            \item Using \eqref{Image of T12 maps to}, we see that
            \[
                T_{1, 2}T_{1, 0}[[\id, 1]] = T_{1, 2}[[\beta, 1]] \mapsto [\id, X^{2p - 2}].
            \]
            Moreover, $[\id, X^{2p - 2}]$ generates $\KZind V_{p - 1}$ in $\KZind V_{2p - 2}/V_{2p - 2}^*$.

            \item Again using \eqref{Image of T12 maps to}, we see that
            \[
                (1 + T_{1, 2}T_{1, 0})[[\id, 1]] = [[\id, 1]] + T_{1, 2}[[\beta, 1]] \mapsto [\id, X^{2p - 2} - X^{p - 1}Y^{p - 1} + Y^{2p - 2}].
            \]
            Moreover $[\id, X^{2p - 2} - X^{p - 1}Y^{p - 1} + Y^{2p - 2}]$ generates $\KZind V_{0}$.
            \end{itemize}

            We have proved that the natural map induces isomorphisms
            \begin{eqnarray}\label{Main comparison theorems in the non-commutative case}
                \frac{\IZind 1}{(T_{1, 2}T_{1, 0})} \xrightarrow{\sim} \KZind V_0 \quad \text{ and } \quad \frac{\IZind 1}{(1 + T_{1, 2}T_{1, 0})} \xrightarrow{\sim} \KZind V_{p - 1}.
            \end{eqnarray}
            These maps are respectively given by
            \[
                [[\id, 1]] \mapsto [\id, 1] \quad \text{ and } \quad [[\id, 1]] \mapsto [\id, -X^{p - 1}].
            \]
            Indeed, the first map sends $[[\id, 1]]$ to $[\id, Y^{2p - 2} - X^{p - 1}Y^{p - 1}] \equiv [\id, X^{2p - 2} - X^{p - 1}Y^{p - 1} + Y^{2p - 2}] \mod \KZind V_{p - 1}$ and the second map sends $[[\id, 1]]$ to $[\id, Y^{2p - 2} - X^{p - 1}Y^{p - 1}] \equiv [\id, -X^{2p - 2}] \mod \KZind V_0$. Now using \eqref{T-10 in the non-commutative case} and \eqref{Relations in the non-commutative Hecke algebra}, we see that $\im T_{1, 2}T_{1, 0} \subseteq \Ker (T_{-1, 0} + T_{1, 0})$ and $\im (1 + T_{1, 2}T_{1, 0}) \subseteq \Ker T_{-1, 0}$. Therefore they are operators on the quotients
            \[
                \frac{\IZind 1}{(T_{1, 2}T_{1, 0})} \quad \text{ and } \quad \frac{\IZind 1}{(1 + T_{1, 2}T_{1, 0})},
            \]
            respectively. Then under the first map in \eqref{Main comparison theorems in the non-commutative case}, we see that
            \[
                (T_{-1, 0} + T_{1, 0})[[\id, 1]] = \sum_{\lambda \in I_1}\left[\left[\begin{pmatrix}p & \lambda \\ 0 & 1\end{pmatrix}, 1\right]\right] + [[\beta, 1]] \mapsto \sum_{\lambda \in I_1}\left[\begin{pmatrix}p & \lambda \\ 0 & 1\end{pmatrix}, 1\right] + \left[\alpha, 1\right] = T[\id, 1]
            \]
            since $\beta = \alpha w$ and $w \in KZ$. Similarly, under the second map
            \[
                T_{-1, 0}[[\id, 1]] = \sum_{\lambda \in I_1}\left[\left[\begin{pmatrix}p & \lambda \\ 0 & 1\end{pmatrix}, 1\right]\right] \mapsto \sum_{\lambda \in I_1}\left[\begin{pmatrix}p & \lambda \\ 0 & 1\end{pmatrix}, -X^{p - 1}\right] = T[\id, -X^{p - 1}]. \qedhere
            \]
        \end{proof}
    \begin{remark}
    It is well known that elements in $\KZind V_0$ are in bijection with functions on the vertices of $\Delta$ with finite support. The element $[g, 1]$ corresponds to the function that sends the vertex $g \Zp^2$ to $1$ and all other vertices to $0$. In this spirit, one may ask what is the pictorial interpretation of the map
    \begin{eqnarray*}
        \IZind 1 & \xrightarrow{} & \KZind V_0.
    \end{eqnarray*}
    In the language of functions on the oriented edges and vertices of the tree $\Delta$, this map sends functions supported on edges to functions supported on the origins of those edges. For instance, if $F$ is a function that maps an (oriented) edge $e = (v_1, v_2)$ to $1$ and all other (oriented) edges to $0$, then the map displayed above sends $F$ to the function that maps $v_1$ to $1$ and all other vertices to $0$. Moreover, a short computation using the formulas of $T_{1, 0}$ and $T_{1, 2}$ in \cite[Lemma 6]{BL95} shows that on an edge $e''$, the value
    \[
        (T_{1, 2}T_{1, 0}F)(e'') =
        \begin{cases}
            1 & \text{ if } o(e'') = o(e) \text{ and } e'' \neq e, \\
            0 & \text{ otherwise.}
        \end{cases}
    \]
    Since there are $p$ edges satisfying the conditions $o(e'') = o(e)$ and $e'' \neq e$, we see that the isomorphism above sends $T_{1, 2}T_{1, 0}F$ to $0$. Pictorially, $F$ is supported on the red edge and $T_{1, 2}T_{1, 0}F$ is supported on the blue edges in the following figure:
    \begin{center}
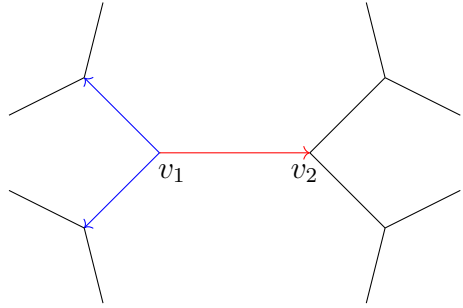

    \begin{tikzpicture}
        \draw[->, red] (-1, 0) -- (1, 0);
        \draw[black] (1, 0) -- (2, 1);
        \draw[black] (1, 0) -- (2, -1);
        \draw[black] (2, 1) -- (1.75, 2);
        \draw[black] (2, 1) -- (3, 0.5);
        \draw[black] (2, -1) -- (3, -0.5);
        \draw[black] (2, -1) -- (1.75, -2);
        \draw[<-, blue] (-2, 1) -- (-1, 0);
        \draw[<-, blue] (-2, -1) -- (-1, 0);
        \draw[black] (-2, 1) -- (-1.75, 2);
        \draw[black] (-2, 1) -- (-3, 0.5);
        \draw[black] (-2, -1) -- (-3, -0.5);
        \draw[black] (-2, -1) -- (-1.75, -2);
        \filldraw[black] (-1, 0) circle node[anchor=north]{\>\> $v_1$};
        \filldraw[black] (1, 0) circle node[anchor=north]{\!\!\!\! $v_2$};
    \end{tikzpicture}
    
    \captionof{figure}{The tree for $p = 2$.}
    \end{center}
    \end{remark}
\section{Proof of Theorem~\ref{Iwahori mod p LLC}}
In this section, we use Theorem~\ref{Comparison theorem in commutative Hecke algebra} and Theorem~\ref{Comparison theorem in the non-commutative Hecke algebra case} to rewrite Breuil's mod $p$ LLC as

\begin{theorem}
    For $r \in \{0, \ldots, p - 1\}$, $\lambda \in \brFp$ and a smooth character $\eta: \Qp^* \to \brFp^*$, define
    \begin{itemize}
        \item If $\lambda = 0:$ 
            \[
                (\ind \omega_2^{r + 1}) \otimes \eta \xleftrightarrow{\quad \quad} \frac{\IZind d^r}{(T_{-1, 0} + \delta_{r, p - 1}T_{1, 0}) + (T_{1, 2} + \delta_{r, 0}T_{1, 0})} \otimes \eta
            \]
        \item If $\lambda \neq 0:$
             \begin{eqnarray*}
                (\mu_{\lambda}\omega^{r + 1} \oplus \mu_{\lambda^{-1}})\otimes \eta \!\!\!\! & \xleftrightarrow{\quad \quad} \!\!\!\! & \left(\frac{\IZind d^r}{(T_{-1, 0} + \delta_{r, p - 1}T_{1, 0}) + (T_{1, 2} + \delta_{r, 0}T_{1, 0} - \lambda)} \otimes \eta\right)^{\!\!\rmss} \!\!\oplus \\
                && \left(\frac{\IZind d^{[p - 3 - r]}}{(T_{-1, 0} + \delta_{[p - 3 - r], p - 1}T_{1, 0}) + (T_{1, 2} + \delta_{[p - 3 - r], 0}T_{1, 0} - \lambda^{-1})}\otimes \eta\omega^{r + 1}\right)^{\!\!\rmss}\!\!\!.
            \end{eqnarray*}
    \end{itemize}
\end{theorem}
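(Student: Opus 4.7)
The plan is to translate Breuil's definition $\pi(r, \lambda, \eta) = (\KZind V_r / (T - \lambda)) \otimes \eta$ into the Iwahori picture by applying the three comparison theorems of Section~\ref{Comparison theorem section} case by case in $r$, and then reshaping the two edge-case quotients into the symmetric form featuring the Kronecker deltas by means of the automorphism from Remark~\ref{Order 2 automorphism}.

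For $0 < r < p - 1$, Theorem~\ref{Comparison theorem in commutative Hecke algebra} presents $\KZind V_r$ as $\IZind d^r / (T_{-1, 0})$ and sends $T$ to $T_{1, 2}$, so $(T - \lambda)$ becomes $(T_{1, 2} - \lambda)$ and, since $\delta_{r, 0} = \delta_{r, p-1} = 0$ throughout this range, the quotient obtained is exactly the one stated. For $r \in \{0, p - 1\}$ the character $d^r$ is trivial, and Theorem~\ref{Comparison theorem in the non-commutative Hecke algebra case} supplies presentations $\KZind V_0 \simeq \IZind 1 / (T_{1, 2} T_{1, 0})$ (with $T$ corresponding to $T_{-1, 0} + T_{1, 0}$) and $\KZind V_{p - 1} \simeq \IZind 1 / (1 + T_{1, 2} T_{1, 0})$ (with $T$ corresponding to $T_{-1, 0}$). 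Dividing further by $T - \lambda$ then gives respectively
\[
    \IZind 1 / \bigl((T_{1, 2} T_{1, 0}) + (T_{-1, 0} + T_{1, 0} - \lambda)\bigr) \quad \text{and} \quad \IZind 1 / \bigl((1 + T_{1, 2} T_{1, 0}) + (T_{-1, 0} - \lambda)\bigr),
\]
which are asymmetric in $T_{1, 2}$ and $T_{-1, 0}$ and so not yet in the form asserted.

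To put these into the uniform shape $(T_{-1, 0} + \delta_{r, p-1} T_{1, 0}) + (T_{1, 2} + \delta_{r, 0} T_{1, 0} - \lambda)$, I push both subrepresentations through the order-two $G$-automorphism $T_{1, 0}$ of $\IZind 1$ from Remark~\ref{Order 2 automorphism}. Using only $T_{-1, 0} = T_{1, 0} T_{1, 2} T_{1, 0}$ and $T_{1, 0}^2 = 1$, one verifies the operator identities
\[
    T_{1, 0} \circ (T_{1, 2} T_{1, 0}) = T_{-1, 0}, \quad T_{1, 0} \circ (T_{-1, 0} + T_{1, 0} - \lambda) = (T_{1, 2} + T_{1, 0} - \lambda) \circ T_{1, 0},
\]
from which $T_{1, 0}$ carries the $r = 0$ subrepresentation above onto $(T_{-1, 0}) + (T_{1, 2} + T_{1, 0} - \lambda)$; since $\delta_{0, 0} = 1$, this is the desired form. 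A parallel computation with $1 + T_{1, 2} T_{1, 0}$ in place of $T_{1, 2} T_{1, 0}$ and $T_{-1, 0} - \lambda$ in place of $T_{-1, 0} + T_{1, 0} - \lambda$ produces $(T_{-1, 0} + T_{1, 0}) + (T_{1, 2} - \lambda)$ for $r = p - 1$, again the desired form because $\delta_{p-1, p-1} = 1$.

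What remains is bookkeeping. Twisting by the smooth character $\eta$ is $G$-equivariant and commutes with every isomorphism in sight; the supersingular case $\lambda = 0$ is identical with the $-\lambda$ term dropped; the second direct summand in the non-supersingular case is produced by re-running the same three-case analysis with $(r, \lambda, \eta)$ replaced by $([p - 3 - r], \lambda^{-1}, \eta \omega^{r + 1})$; and semisimplification, applied at the very end, is unaffected by the isomorphisms and the twist. The only non-routine step is the $T_{1, 0}$-conjugation that symmetrises the two non-commutative presentations, and the point to verify carefully there is that the two subrepresentations whose quotients are taken really do agree under $T_{1, 0}$, which is exactly what the two displayed operator identities guarantee.
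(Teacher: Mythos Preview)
Your proof is correct and follows essentially the same route as the paper's: the commutative range uses Theorem~\ref{Comparison theorem in commutative Hecke algebra} verbatim, and for $r\in\{0,p-1\}$ you combine Theorem~\ref{Comparison theorem in the non-commutative Hecke algebra case} with the $T_{1,0}$-automorphism of Remark~\ref{Order 2 automorphism} to bring the quotients into the stated symmetric form. The only cosmetic difference is ordering: the paper first applies $T_{1,0}$ to the quotient $\IZind 1/(T_{1,2}T_{1,0})$ (resp.\ $\IZind 1/(1+T_{1,2}T_{1,0})$) and then kills $T-\lambda$, whereas you kill $T-\lambda$ first and then push the resulting subrepresentation through $T_{1,0}$; since $T_{1,0}$ is a $G$-automorphism this commutes with taking quotients and the two arguments are equivalent.
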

\begin{proof}
    We only have to prove that for $r \in \{0, 1, \ldots, p - 1\}$, $\lambda \in \brFp$ and a smooth character $\eta : \Qp^* \to \brFp^*$,
    \[
        \pi(r, \lambda, \eta) \simeq \frac{\IZind d^r}{(T_{-1, 0} + \delta_{r, p - 1}T_{1, 0}) + (T_{1, 2} + \delta_{r, 0}T_{1, 0} - \lambda)} \otimes \eta.
    \]
    
    First assume that $r \in \{1, 2, \ldots, p - 2\}$. Then using Theorem~\ref{Comparison theorem in commutative Hecke algebra}, there is an isomorphism
    \[
        \KZind V_{r} \simeq \frac{\IZind d^r}{(T_{-1, 0})}
    \]
    which sends the Hecke operator $T$ to $T_{1, 2}$. Therefore we see that
    \[
        \pi(r, \lambda, \eta) = \frac{\KZind V_{r}}{T - \lambda}\otimes \eta \simeq \frac{\IZind d^r}{(T_{-1, 0}) + (T_{1, 2} - \lambda)}\otimes \eta.
    \]

    Next, assume that $r = 0$. Using Theorem~\ref{Comparison theorem in the non-commutative Hecke algebra case}, there is an isomorphism
    \[
        \KZind V_0 \simeq \frac{\IZind 1}{(T_{1, 2}T_{1, 0})}
    \]
    under which the Hecke operator $T$ corresponds to $T_{-1, 0} + T_{1, 0}$. Next, using Remark~\ref{Order 2 automorphism}, we see that there is an isomorphism
    \[
        \frac{\IZind 1}{(T_{1, 2}T_{1, 0})} \simeq \frac{\IZind 1}{(T_{-1, 0})}
    \]
    under which $T_{-1, 0} + T_{1, 0}$ on the left corresponds to $T_{1, 2} + T_{1, 0}$ on the right. Therefore
    \[
        \pi(0, \lambda, \eta) = \frac{\KZind V_0}{(T - \lambda)}\otimes \eta \simeq \frac{\IZind 1}{(T_{-1, 0}) + (T_{1, 2} + T_{1, 0} - \lambda)} \otimes \eta.
    \]
    
    Finally assume that $r = p - 1$. Using Theorem $\ref{Comparison theorem in the non-commutative Hecke algebra case}$, there is an isomorphism
    \[
        \KZind V_{p - 1} \simeq \frac{\IZind 1}{(1 + T_{1, 2}T_{1, 0})}
    \]
    under which $T$ corresponds to $T_{-1, 0}$. Again using Remark~\ref{Order 2 automorphism} as above we have
    \[
        \frac{\IZind 1}{(1 + T_{1, 2}T_{1, 0})} \simeq \frac{\IZind 1}{(T_{-1, 0} + T_{1, 0})}
    \]
    under which $T_{-1, 0}$ on the left corresponds to $T_{1, 2}$ on the right. Therefore
    \[
        \pi(p - 1, \lambda, \eta) = \frac{\KZind V_{p - 1}}{(T - \lambda)}\otimes \eta \simeq \frac{\IZind 1}{(T_{-1, 0} + T_{1, 0}) + (T_{1, 2} - \lambda)}\otimes \eta. \qedhere
    \]
\end{proof}

{\noindent \bf Acknowledgements.}
I thank E. Ghate for assigning this problem and for many discussions. I also thank A. Jana for discussions on Iwahori-Hecke operators.

{\footnotesize

}
\end{document}